\newtheorem{theorem}{Theorem}[section]
\newtheorem{definition}[theorem]{Definition}
\newtheorem{example}[theorem]{Example}
\newtheorem{remark}[theorem]{Remark}
\newenvironment{proof}{\smallskip\par{\sc Proof.}\enspace}%
 {{\unskip\nobreak\hfil\penalty50\hskip2em
          \hbox{}\nobreak\hfil{\rule[-1pt]{5pt}{10pt}}
          \parfillskip=0pt\finalhyphendemerits=0
          \par\medskip}} 
\def\section{\@startsection {section}{1}{\z@}{3.25ex plus 1ex minus
 .2ex}{1.5ex plus .2ex}{\large\bf}}
\def\subsection{\@startsection{subsection}{2}{\z@}{3.25ex plus 1ex minus
 .2ex}{1.5ex plus .2ex}{\normalsize\bf}}
\begin{document}

\begin{center}
\LARGE\textsf{A new approach to Poincar\'e-type inequalities on the
Wiener space}
\end{center}

\vspace*{.3in}

\begin{center}
\sc
\large{Alberto Lanconelli}\\
\end{center}

\begin{center}
\sc
Dipartimento di Matematica \\
Universita' degli Studi di Bari\\
 Via E. Orabona, 4\\
 70125 Bari - Italy\\
E-Mail: {\it alberto.lanconelli@uniba.it}
\end{center}

\vspace*{.3in}

\begin{abstract}
We prove a new type of Poincar\'e inequality on abstract Wiener
spaces for a family of probability measures which are absolutely
continuous with respect to the reference Gaussian measure. This
class of probability measures is characterized by the strong
positivity (a notion introduced by Nualart and Zakai in \cite{NZ})
of their Radon-Nikodym densities. Measures of this type do not
belong in general to the class of log-concave measures, which are a
wide class of measures satisfying the Poincar\'e inequality
(Brascamp and Lieb \cite{BL}). Our approach is based on a point-wise
identity relating Wick and ordinary products and on the notion of
strong positivity which is connected to the non negativity of Wick
powers. Our technique leads also to a partial generalization of the
Houdr\'e and Kagan \cite{HK} and Houdr\'e  and P\'erez-Abreu
\cite{HPA} Poincar\'e-type inequalities.
\end{abstract}

\bigskip
\noindent \textbf{Keywords:} Poincar\'e inequality, abstract Wiener
space, Wick product, stochastic exponentials.

\bigskip
\noindent\textbf{Mathematics Subject Classification (2000):} 60H07,
60H30, 60H40.

\section{Introduction}

The \emph{Poincar\'e} or \emph{spectral gap} inequality for the
standard $n$-dimensional Gaussian measure states that
\begin{eqnarray}\label{poincare}
\int_{\mathbb{R}^n}f^2(x)d\mu(x)-\Big(\int_{\mathbb{R}^n}f(x)d\mu(x)\Big)^2\leq
\int_{\mathbb{R}^n}|\nabla f(x)|^2d\mu(x)
\end{eqnarray}
where $f$ is a smooth function,
$d\mu(x)=(2\pi)^{-\frac{n}{2}}\exp\{-\frac{|x|^2}{2}\}dx$ and
$\nabla$ denotes the gradient operator. Inequality (\ref{poincare})
was first
proved by Nash in \cite{Nash} and later on rediscovered by Chernoff in \cite{Chernoff}.\\
The literature concerning extensions, refinements and applications
of the Poincar\'e inequality is very rich. One of the key features
of inequality (\ref{poincare}) is that it is dimension independent
and in fact one can prove (see Gross \cite{G} also for the
connection with logarithmic Sobolev inequalities) its
validity on abstract Wiener spaces (see Section 2 below).\\
An important refinement to inequality (\ref{poincare}) was proposed
by Houdr\'e and Kagan in \cite{HK} where they proved the following
inequalities
\begin{eqnarray}\label{HK introduction}
\sum_{l=1}^{2k}\frac{(-1)^{l+1}}{l!} \int_{\mathbb{R}^n}|
\nabla^lf(x)|^2d\mu(x)&\leq&
\int_{\mathbb{R}^n}f^2(x)d\mu(x)-\Big(\int_{\mathbb{R}^n}f(x)d\mu(x)\Big)^2\nonumber\\
&\leq& \sum_{l=1}^{2k-1}\frac{(-1)^{l+1}}{l!}\int_{\mathbb{R}^n}|
\nabla^lf(x)|^2d\mu(x).
\end{eqnarray}
Here $\nabla^l$ stands for iterated gradients and $|\cdot |$ are the
Euclidean norms on the corresponding $\mathbb{R}^{n^l}$ spaces.
Later on this result was extended to the classical Wiener space by
Houdr\'e and P\'erez-Abreu in \cite{HPA}.\\
One of the most celebrated generalizations of the Poincar\'e
inequality (\ref{poincare}) is due to Brascamp and Lieb \cite{BL}
which extended the validity of (\ref{poincare}) to the class of
log-concave measures. More precisely they proved that if $\nu$ is a
probability measure on $\mathbb{R}^n$ of the form
$d\nu(x)=e^{-V(x)}dx$, for some smooth strictly convex function $V$,
then
\begin{eqnarray}\label{BL introduction}
\int_{\mathbb{R}^n}f^2(x)d\nu(x)-\Big(\int_{\mathbb{R}^n}f(x)d\nu(x)\Big)^2\leq
\int_{\mathbb{R}^n}\langle (\mathcal{H}V(x))^{-1}\nabla f(x),\nabla
f(x)\rangle d\nu(x),
\end{eqnarray}
where $\mathcal{H}V$ is the Hessian matrix of $V$. The previous
inequality was adapted to the context of abstract Wiener spaces by
Feyel and \"Ust\"unel in \cite{FU}.\\
The aim of the present paper is to prove a new type of Poincar\'e
inequality for a class of probability measures on an abstract Wiener
space which is not included in the family of log-concave measures.
We are concerned with probability measures that are absolutely
continuous with respect to the reference Gaussian measure and with
Radon-Nikodym densities that satisfy a strong positivity requirement
(see Section 3 below). This strong positivity condition is the
crucial ingredient of our approach; the notion was introduced in
Nualart and Zakai \cite{NZ} and it is connected to the non
negativity of Wick powers (see Theorem \ref{characterization theorem
for strong positivity} below). It is in fact a formula involving the
Wick product to be the source of inspiration for this paper; the
formula in its simplest appearance reads
\begin{eqnarray}\label{idea}
(f\diamond f)(x)=f(x)^2+\sum_{l\geq
1}\frac{(-1)^l}{l!}[f^{(l)}(x)]^2.
\end{eqnarray}
(Here $f^{(l)}$ denotes the $l$-th derivative of $f$). Identity
(\ref{idea}) appeared for the first time in Nualart and Zakai
\cite{NZ} and independently in Hu and {\O}ksendal \cite{HO}, both
times without proof; see Hu and Yan \cite{HY} and Lanconelli
\cite{L} for proofs in different settings. \\
Taking the integral of
both sides of (\ref{idea}) with respect to the one dimensional
Gaussian measure and using the identity
\begin{eqnarray*}
\int_{\mathbb{R}}(f\diamond
f)(x)d\mu(x)=\Big(\int_{\mathbb{R}}f(x)d\mu(x)\Big)^2
\end{eqnarray*}
one gets
\begin{eqnarray*}
\Big(\int_{\mathbb{R}}f(x)d\mu(x)\Big)^2=\int_{\mathbb{R}}f(x)^2d\mu
(x)+\sum_{l\geq
1}\frac{(-1)^l}{l!}\int_{\mathbb{R}}[f^{(l)}(x)]^2d\mu(x),
\end{eqnarray*}
which coincides with (\ref{HK introduction}) if one formally let $k$
going to infinity.\\
The paper is organized as follows: In Section 2 we introduce the
notation and background material needed to develop our approach to
the Poincar\'e inequality ; Section 3 is devoted to the key notion
of strong positivity with several illustrating examples; finally in
Section 4 we state and prove the main result of the paper (Theorem
\ref{main theorem}) together with a refinement (Theorem \ref{main
theorem 2}) in the spirit of inequality (\ref{HK introduction})

\section{Framework}

The aim of this section is to collect the necessary background
material and fix the notation. For the sake of clarity the topics
will not be treated in their greatest generality but they will be
developed in relation to the application we have in mind, the
Poincar\'e inequality. For more details the interested reader is
referred to the books of Nualart \cite{Nualart}, Janson \cite{J} and
to the paper by Potthoff and Timpel \cite{PT} (the latter reference
is suggested, among other things, for the theory of the spaces
$\mathcal{G}_{\lambda}$ and the notion of Wick product).

\subsection{The spaces $\mathbb{D}^{k,p}$ and $\mathcal{G}_{\lambda}$}

Let $(H,W,\mu)$ be an \emph{abstract Wiener space}, that means
$(H,\langle\cdot,\cdot\rangle_H)$ is a separable Hilbert space which
is continuously and densely embedded in the Banach space
$(W,\Vert\cdot\Vert_W)$ and $\mu$ is a Gaussian probability measure
on the Borel sets $\mathcal{B}(W)$ of $W$ such that
\begin{eqnarray}\label{gaussian characteristic}
\int_{W}e^{i\langle w,w^*\rangle}d\mu(w)=e^{-\frac{1}{2}\Vert
w^*\Vert_H^2},\quad\mbox{ for all }w^*\in W^*.
\end{eqnarray}
Here $W^*\subset H$ denotes the dual space of $W$ (which in turn is
dense in $H$) and $\langle\cdot,\cdot\rangle$ stands for the dual
pairing between $W$ and $W^*$. We will refer to $H$ as the
\emph{Cameron-Martin} space of $W$. Set for $p\geq 1$
\begin{eqnarray*}
\mathcal{L}^p(W,\mu):=\Big\{F:W\to\mathbb{R}\mbox{ such that }\Vert
F\Vert_p:=\Big(\int_W|F(w)|^pd\mu(w)\Big)^{\frac{1}{p}}<+\infty\Big\}.
\end{eqnarray*}
To ease the notation we will write $E[F]$ for $\int_WF(w)d\mu(w)$
and call it the \emph{expectation} of $F$. It follows from
(\ref{gaussian characteristic}) that the map
\begin{eqnarray*}
W^*&\to&\mathcal{L}^2(W,\mu)\\
w^*&\mapsto&\langle w,w^*\rangle
\end{eqnarray*}
is an isometry; we can therefore define for $\mu$-almost all $w\in
W$ the quantity $\langle w,h\rangle$ for $h\in H$  as an element of
$\mathcal{L}^2(W,\mu)$. This element will be denoted
by $\delta(h)$.\\
We now introduce the gradient operator and the class of functions
for which our Poincar\'e inequality will hold true. On the set
\begin{eqnarray*}
\mathcal{S}:=\{F=\varphi(\delta(h_1),...,\delta(h_n))\mbox{ where
}n\in\mathbb{N}, h_1,...,h_n\in H\mbox{ and }\varphi\in
C_0^{\infty}(\mathbb{R}^n)\}
\end{eqnarray*}
define
\begin{eqnarray*}
D(\varphi(\delta(h_1),...,\delta(h_n))):=\sum_{k=0}^n\frac{\partial\varphi}{\partial
x_j}(\delta(h_1),...,\delta(h_n))h_j.
\end{eqnarray*}
The operator $D$ maps $\mathcal{S}$ into $\mathcal{L}^p(W,\mu;H)$;
moreover by means of the integration by parts formula
\begin{eqnarray*}
E[\langle DF,h\rangle_H]=E[F\delta(h)],\quad F\in\mathcal{S}, h\in H
\end{eqnarray*}
one can prove that $D$ is closable in $\mathcal{L}^p(W,\mu)$; we
therefore define the space $\mathbb{D}^{1,p}$ to be closure of
$\mathcal{S}$ under the norm
\begin{eqnarray*}
\Vert F\Vert_{1,p}:=\big(E[|F|^p]+E[\Vert
DF\Vert_H^p]\big)^{\frac{1}{p}}.
\end{eqnarray*}
In a similar way, iterating the definition of $D$ and introducing
for any $k\in\mathbb{N}$ the norms
\begin{eqnarray*}
\Vert F\Vert_{k,p}:=\big(E[|F|^p]+\sum_{j=1}^kE[\Vert
D^jF\Vert_{H^{\otimes j}}^p]\big)^{\frac{1}{p}}.
\end{eqnarray*}
one constructs the spaces $\mathbb{D}^{k,p}$. \\
In order to prove our main results we need to introduce an
additional class of functions. This will be related to the family of
probability measures with respect to which the Poincar\'e inequality
will be proved. To this aim recall that by the Wiener-It\^o chaos
decomposition theorem any element $F$ in $\mathcal{L}^2(W,\mu)$ has
an infinite orthogonal expansion
\begin{eqnarray*}
F=\sum_{n\geq 0} \delta^n(f_n),
\end{eqnarray*}
where $f_n\in H^{\hat{\otimes}n}$, the space of symmetric elements
of $H^{\otimes n}$, and $\delta^n(f_n)$ stands for the multiple
Wiener-It\^o integral of $f_n$. We remark that $\delta^1(f_1)$
coincides with the element $\delta(f_1)$ mentioned above. Moreover
one has
\begin{eqnarray*}
\Vert F\Vert_2^2=\sum_{n\geq 0}n!\Vert f_n\Vert^2_{H^{\otimes n}}.
\end{eqnarray*}
It is useful to observe that if $F$ happens to be in
$\mathbb{D}^{1,2}$ then
\begin{eqnarray*}
E[\Vert DF\Vert_H^2]=\sum_{n\geq 1}n n!\Vert f_n\Vert^2_{H^{\otimes
n}}.
\end{eqnarray*}
For any $\lambda\geq 0$ define the operator $\Gamma(\lambda)$ acting
on $\mathcal{L}^2(W,\mu)$ as
\begin{eqnarray*}
\Gamma(\lambda)\Big(\sum_{n\geq 0}\delta^n(f_n)\Big):=\sum_{n\geq 0}
\lambda^n\delta^n(f_n).
\end{eqnarray*}
If $\lambda\leq 1$ then $\Gamma(\lambda)$ coincides with the
Ornstein-Uhlenbeck semigroup
\begin{eqnarray*}
(P_tf)(w):=\int_Wf(e^{-t}w+\sqrt{1-e^{-2t}}\tilde{w})d\mu(\tilde{w}),\quad
w\in W, t\geq 0
\end{eqnarray*}
(take $\lambda=e^{-t}$) which is a bounded operator. Otherwise
$\Gamma(\lambda)$ is an unbounded operator with domain
\begin{eqnarray*}
\mathcal{G}_{\lambda}:=\Big\{F=\sum_{n\geq 0}
\delta^n(f_n)\in\mathcal{L}^2(W,\mu)\mbox{ such that }\Vert
F\Vert_{\mathcal{G}_{\lambda}}^2:=\sum_{n\geq 0}n!\lambda^{2n}\Vert
f_n\Vert^2_{H^{\otimes n}}<+\infty\Big\}.
\end{eqnarray*}
The family $\{\mathcal{G}_{\lambda}\}_{\lambda\geq 1}$ is a
collection of Hilbert spaces with the property that
\begin{eqnarray*}
\mathcal{G}_{\lambda_2}\subset\mathcal{G}_{\lambda_1}\subset\mathcal{L}^2(W,\mu)
\end{eqnarray*}
for $1<\lambda_1<\lambda_2$. Define
$\mathcal{G}:=\bigcap_{\lambda\geq 1}\mathcal{G}_{\lambda}$ endowed
with the projective limit topology; the space $\mathcal{G}$ turns
out to be a reflexive Fr\'echet space. Its dual $\mathcal{G}^*$ is a
space of generalized functions that can be represented as
$\mathcal{G}^*=\bigcup_{\lambda> 0}\mathcal{G}_{\lambda}$. We remark
that for $F\in\mathcal{L}^2(W,\mu)$ and $G\in\mathcal{G}$ one has
\begin{eqnarray*}
\langle\langle F,G\rangle\rangle=E[F\cdot G].
\end{eqnarray*}
where $\langle\langle\cdot,\cdot\rangle\rangle$ stands for the dual
pairing between $\mathcal{G}^*$ and $\mathcal{G}$. It also follows
from our construction that for $1<\lambda_1<\lambda_2$,
\begin{eqnarray*}
\mathcal{G}\subset\mathcal{G}_{\lambda_2}\subset\mathcal{G}_{\lambda_1}\subset\mathcal{L}^2(W,\mu)
\subset\mathcal{G}_{\frac{1}{\lambda_1}}
\subset\mathcal{G}_{\frac{1}{\lambda_2}}\subset\mathcal{G}^*.
\end{eqnarray*}
Here $\mathcal{G}_{\frac{1}{\lambda_i}}$ represents the dual space
of $\mathcal{G}_{\lambda_i}$, $i=1,2$, respectively.
\begin{remark}
It is not difficult to see that for $\lambda>1$ we have
$\mathcal{G}_{\lambda}\subset \bigcap_{k\geq 1}\mathbb{D}^{k,2}$.
Now, in the context of the classical Wiener space (i.e. $H$ is the
space of absolutely continuous functions on $[0,1]$ with square
integrable derivative and which are zero at zero and $W$ is the
space of continuous functions on $[0,1]$ which are also zero at
zero) the Stroock formula (see \cite{Stroock}) tells that if
$F\in\bigcap_{k\geq 1}\mathbb{D}^{k,2}$ then the elements $f_n$
appearing in its chaos decomposition are explicitly given by
\begin{eqnarray*}
f_n(t_1,...,t_n)=\frac{1}{n!}\int_0^{t_1}\cdot\cdot\cdot\int_0^{t_n}E[D_{s_1,...,s_n}F]ds_1\cdot\cdot\cdot
ds_n
\end{eqnarray*}
where $D_{s_1,...,s_n}F$ stands for the $n$-th Malliavin derivative
of $F$. Using this representation together with the Jensen
inequality we get a sufficient condition for $F$ to be in
$\mathcal{G}_{\lambda}$: \\
Let $F\in\bigcap_{k\geq 1}\mathbb{D}^{k,2}$ be such that the series
\begin{eqnarray*}
\sum_{n\geq 0}\frac{\lambda^{2n}}{n!}E[\Vert D^nF\Vert^2_{H^{\otimes
n}}]
\end{eqnarray*}
converges. Then $F\in\mathcal{G}_{\lambda}$.
\end{remark}
One of the most representative elements of $\mathcal{G}$ is the so
called \emph{stochastic exponential}
\begin{eqnarray*}
\mathcal{E}(h):=\exp\Big\{\delta(h)-\frac{1}{2}\Vert
h\Vert_H^2\Big\},\quad h\in H.
\end{eqnarray*}
(We recall that stochastic exponentials correspond among other
things to Radon-Nikodym derivatives, with respect to the underlying
Gaussian measure $\mu$, of probability measures on
$(W,\mathcal{B}(W))$ obtained through shifted copies of $\mu$ along
Cameron-Martin directions). Its membership to $\mathcal{G}$ can be
easily verified since the Wiener-It\^o chaos decomposition of
$\mathcal{E}(h)$ is obtained
with $f_n=\frac{h^{\otimes n}}{n!}$.\\
Moreover the linear span of the stochastic exponentials, that we
denote with $\mathcal{E}$, is dense in $\mathcal{L}^p(W,\mu)$,
$\mathbb{D}^{k,p}$, for any $p\geq 1$ and $k\in\mathbb{N}$, and $\mathcal{G}$.

\subsection{The Wick product}

For $h,k\in H$ define
\begin{eqnarray*}
\mathcal{E}(h)\diamond\mathcal{E}(k):=\mathcal{E}(h+k).
\end{eqnarray*}
This is called the \emph{Wick product} of $\mathcal{E}(h)$ and
$\mathcal{E}(k)$. Extend this operation by linearity to
$\mathcal{E}$, the linear span of stochastic exponentials, to get a
commutative, associative and distributive (with respect to the sum)
multiplication.\\
Now let $F,G\in\mathcal{L}^2(W,\mu)$ and $F_n,G_n\in\mathcal{E}$
such that
\begin{eqnarray*}
\lim_{n\to +\infty}\Vert F_n-F\Vert_2=0\quad\mbox{ and
}\quad\lim_{n\to +\infty}\Vert G_n-G\Vert_2=0.
\end{eqnarray*}
Set
\begin{eqnarray*}
F\diamond G:=\lim_{n\to +\infty}F_n\diamond G_n.
\end{eqnarray*}
The limit above can not be interpreted in the
$\mathcal{L}^2(W,\mu)$-norm since the Wick product is easily seen to
be an unbounded bilinear operator on that space. The Wick product
$F\diamond G$ of the two square integrable elements $F$ and $G$
lives in the distributional space $\mathcal{G}^*$; more precisely,
\begin{eqnarray}\label{Wick product in G^*}
F,G\in\mathcal{L}^2(W,\mu)\quad\Rightarrow\quad F\diamond
G\in\mathcal{G}_{\frac{1}{\sqrt{2}}}.
\end{eqnarray}
One can also prove that the Wick product is a continuous bilinear
operator on $\mathcal{G}\times\mathcal{G}$ and on
$\mathcal{G}^*\times\mathcal{G}^*$.\\
We mention that for any $F\in\mathbb{D}^{1,2}$ and $h\in H$ one has
$F\diamond\delta(h)\in\mathcal{L}^2(W,\mu)$; moreover the following
identity holds
\begin{eqnarray*}
F\diamond\delta(h)&=&F\cdot\delta(h)-\langle DF,h\rangle_H\\
&=&D_h^*F
\end{eqnarray*}
where $D_h^*$ is the formal adjoint of $\langle D\cdot,h\rangle_H$
in $\mathcal{L}^2(W,\mu)$.\\
A characterizing property of the Wick product is the following: for
any $F,G\in\mathcal{L}^2(W,\mu)$ and $h\in H$,
\begin{eqnarray}\label{charact}
\langle\langle F\diamond
G,\mathcal{E}(h)\rangle\rangle&=&\langle\langle
F,\mathcal{E}(h)\rangle\rangle\cdot\langle\langle
G,\mathcal{E}(h)\rangle\rangle\nonumber\\
&=&E[F\mathcal{E}(h)]\cdot E[G\mathcal{E}(h)]
\end{eqnarray}
In particular for $h=0$ one gets
\begin{eqnarray}\label{Wick expectation}
\langle\langle F\diamond G,1\rangle\rangle=E[F]\cdot E[G].
\end{eqnarray}
We refer the reader to the papers of Da Pelo et al. \cite{DLS} and
\cite{DLS 2} for other properties and deeper results on the Wick
product.

\section{Strongly positive functions}

We now introduce a concept that will play a crucial role in our
approach to the Poincar\'e inequality.
\begin{definition}\label{posivity}
A generalized function $\Phi\in\mathcal{G}^*$ is said to be
\emph{positive} if for any $\varphi\in\mathcal{G}$ one has
$\langle\langle
\Phi,\varphi\rangle\rangle\geq 0$. \\
A generalized function $\Phi\in\mathcal{G}^*$ is said to be
\emph{strongly positive} if for any $\lambda\geq 1$, the generalized
function $\Gamma(\lambda)\Phi$ is positive.
\end{definition}
The notion of strong positivity was introduced by Nualart and Zakai
in \cite{NZ} and it is related to the positivity improving property
of the Ornstein-Uhlenbeck semigroup. Observe that if $F$ is strongly
positive then it is also positive (according to Definition
\ref{posivity}). Moreover
$F\in\mathcal{L}^p(W,\mu)\subset\mathcal{G}^*$ for some $p>1$ is
positive according to Definition \ref{posivity} if and only if $F$
is non negative in the usual sense, i.e. $\mu(F\geq 0)=1$.
\begin{example}\label{simple example}
Any element of the form $\mathcal{E}(h)$, $h\in H$ is strongly
positive; in fact
\begin{eqnarray*}
\Gamma(\lambda)\mathcal{E}(h)=\mathcal{E}(\lambda h)
\end{eqnarray*}
and therefore $\Gamma(\lambda)\mathcal{E}(h)$ is non negative with
probability one for all $\lambda\geq 1$. In particular, taking $h=0$
we get that non negative constants are strongly positive.\\
It is also straightforward to note that convex combinations of
stochastic exponentials are strongly positive (this is actually a
particular case of Theorem 5.1 in \cite{NZ}).
\end{example}

We now consider a more interesting example

\begin{example}
Let $\{W_{t,x}\}_{0\leq t\leq T, x\in\mathbb{R}}$ be a space-time
white noise and consider the following stochastic partial
differential equation
\begin{eqnarray*}
\partial_t u(t,x)&=&\frac{1}{2}\partial_{xx}u(t,x)+u(t,x)\diamond W_{tx},\quad t\in
]0,T], x\in\mathbb{R}\\
u(0,x)&=&1,\quad x\in\mathbb{R}.
\end{eqnarray*}
Then the unique square integrable mild solution to the previous
Cauchy problem is a
strongly positive element of $\mathcal{G}$.\\
To see this, we proceed at a quite formal level since a rigorous
proof of the previous statement is beyond the scope of this
paper (but can be deduced easily from the results of Hu \cite{H}).\\
For any $\lambda\in\mathbb{R}$ denote with
$\{u^{\lambda}(t,x)\}_{0\leq t\leq T, x\in\mathbb{R}}$ the unique
square integrable mild solution of the Cauchy problem
\begin{eqnarray*}
\partial_t u^{\lambda}(t,x)&=&\frac{1}{2}\partial_{xx}u^{\lambda}(t,x)+\lambda u^{\lambda}(t,x)\diamond W_{tx},\quad t\in
[0,T[, x\in\mathbb{R}\\
u^{\lambda}(0,x)&=&1,\quad x\in\mathbb{R}.
\end{eqnarray*}
It is easy to see that $u^{\lambda}(t,x)=\Gamma(\lambda)u(t,x)$ and
that this last equality implies that for any $\lambda\in\mathbb{R}$,
$\Gamma(\lambda)u(t,x)\in\mathcal{L}^2(W,\mu)$ which is equivalent
to say that $u(t,x)$ belongs to $\mathcal{G}$.\\
Moreover, since it is known (see Bertini and Cancrini \cite{BC})
that for any $\lambda\in\mathbb{R}$ the random variable
$u^{\lambda}(t,x)$ is strictly positive with probability one, from
$u^{\lambda}(t,x)=\Gamma(\lambda)u(t,x)$ we also get the strong
positivity of $u(t,x)$.
\end{example}

The next result is a slight modification of Proposition 5.1 in \cite{NZ}. It becomes a characterization of strong positivity when we
 replace stochastic exponentials with their complex counterparts

\begin{theorem}\label{characterization theorem for strong positivity}
If $\Phi\in\mathcal{G}^*$ is strongly positive then
\begin{eqnarray}\label{strong positivity condition}
\langle\langle \Phi,\varphi\diamond\varphi\rangle\rangle\geq 0\mbox{
for all }\varphi\in\mathcal{E}.
\end{eqnarray}
\end{theorem}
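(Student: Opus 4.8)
The plan is to exploit the characterizing identity~(\ref{charact}) together with the definition of strong positivity, after replacing the real stochastic exponentials by their complex analogues. First I would fix $\varphi\in\mathcal{E}$, so that $\varphi=\sum_{j=1}^m c_j\mathcal{E}(h_j)$ for suitable scalars $c_j$ and elements $h_j\in H$. For a parameter $\theta\in[0,2\pi)$ consider the complex stochastic exponential $\mathcal{E}(e^{i\theta}h)$; by bilinearity and the defining relation $\mathcal{E}(h)\diamond\mathcal{E}(k)=\mathcal{E}(h+k)$ one computes, for any $F,G\in\mathcal{L}^2(W,\mu)$,
\begin{eqnarray*}
\langle\langle F\diamond G,\mathcal{E}(e^{i\theta}h)\rangle\rangle=E[F\mathcal{E}(e^{i\theta}h)]\cdot E[G\mathcal{E}(e^{i\theta}h)],
\end{eqnarray*}
the complex version of~(\ref{charact}); in particular, taking $F=G=\varphi$,
\begin{eqnarray*}
\langle\langle \varphi\diamond\varphi,\mathcal{E}(e^{i\theta}h)\rangle\rangle=\big(E[\varphi\,\mathcal{E}(e^{i\theta}h)]\big)^2.
\end{eqnarray*}
This is not yet manifestly nonnegative, so the idea is to integrate against $\Phi$ and then average cleverly in $\theta$.

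Next I would bring in the operator $\Gamma(\lambda)$. The key algebraic fact is that $\Gamma(\lambda)$ acts on a stochastic exponential by $\Gamma(\lambda)\mathcal{E}(h)=\mathcal{E}(\lambda h)$, and more generally $\Gamma(\lambda)\mathcal{E}(e^{i\theta}h)=\mathcal{E}(\lambda e^{i\theta}h)$. Using the duality $\langle\langle\Phi,\Gamma(\lambda)\psi\rangle\rangle=\langle\langle\Gamma(\lambda)\Phi,\psi\rangle\rangle$ (valid because $\Gamma(\lambda)$ is, up to the pairing, self-adjoint on the chaos decomposition), strong positivity of $\Phi$ gives $\langle\langle\Gamma(\lambda)\Phi,\psi\rangle\rangle\geq 0$ whenever $\psi\in\mathcal{G}$ is pointwise nonnegative. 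The plan is therefore: express $\langle\langle\Phi,\varphi\diamond\varphi\rangle\rangle$ as a limit, as $\lambda\downarrow 1$ or via a Gaussian average, of quantities of the form $\langle\langle\Gamma(\lambda)\Phi,|\text{something}|^2\rangle\rangle$, each of which is nonnegative. Concretely, since $\varphi$ is a finite linear combination of $\mathcal{E}(h_j)$, one can write $\varphi\diamond\varphi$ in terms of the $\mathcal{E}(h_i+h_j)$, and then use the representation of $\mathcal{E}(h)$ as an average of the $\mathcal{E}(e^{i\theta}\cdot)$-type objects — equivalently, use that $\mathcal{E}(h)=\int \mathcal{E}(\delta(\cdot))\,$(Gaussian-type kernel) — to realize $\varphi$ itself, after applying $\Gamma(\lambda)$ with $\lambda>1$, as (a limit of) an average of nonnegative squares $|\Phi_\omega|^2$ against a positive measure. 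Integrating this representation against $\Phi$ and using positivity of $\Gamma(\lambda)\Phi$ on each nonnegative test function, then letting $\lambda\downarrow 1$ and invoking the continuity of the pairing $\langle\langle\cdot,\cdot\rangle\rangle$ on $\mathcal{G}^*\times\mathcal{G}$ (recall $\varphi\diamond\varphi\in\mathcal{G}$ since $\varphi\in\mathcal{E}\subset\mathcal{G}$ and the Wick product is continuous on $\mathcal{G}\times\mathcal{G}$), yields $\langle\langle\Phi,\varphi\diamond\varphi\rangle\rangle\geq 0$.

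The main obstacle I anticipate is making the "average of squares" representation rigorous: one must show that for $\lambda>1$ the function $\Gamma(\lambda)(\varphi\diamond\varphi)$ — or a suitably mollified version — is, pointwise $\mu$-a.e., a genuine nonnegative function (not merely nonnegative in the distributional sense of Definition~\ref{posivity}), so that positivity of $\Gamma(\lambda)\Phi$ can be applied to it. This is where the complex stochastic exponentials enter essentially: writing $\mathcal{E}(\lambda h)$ via an integral over $\mathcal{E}(e^{i\theta}\lambda' h)$ with $\lambda'<\lambda$ exhibits $\Gamma(\lambda)\mathcal{E}(h)$ as an average of terms whose product structure under $\diamond$ collapses, via~(\ref{charact}), to a square of an ordinary (complex) integral; taking real parts and squaring shows the relevant object is a nonnegative random variable. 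Controlling the convergence of these integrals in the $\mathcal{G}_\mu$-norms — so that everything lives in $\mathcal{G}$ and the limit $\lambda\downarrow 1$ is justified — is the technical heart of the argument, and is precisely the point where the hypothesis $\varphi\in\mathcal{E}$ (rather than a general element of $\mathcal{G}$) is used, since finiteness of the linear combination makes all the exchanges of sum, integral and pairing elementary.
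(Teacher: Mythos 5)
First, a bibliographic remark: the paper itself offers no proof of this theorem (it is imported from Proposition 5.1 of Nualart--Zakai), so your proposal can only be judged on its own terms -- and as written it has a genuine gap at precisely the point you yourself flag as ``the technical heart.'' The entire content of the statement is the passage from positivity of each $\Gamma(\lambda)\Phi$ on pointwise nonnegative test functions to nonnegativity of $\langle\langle\Phi,\varphi\diamond\varphi\rangle\rangle$, and the mechanism you propose for this passage -- exhibiting $\varphi\diamond\varphi$ (or a $\Gamma$-transform of it) as an average of pointwise nonnegative squares via complex stochastic exponentials -- does not work in the form described. Writing $\varphi=\sum_{j}c_j\mathcal{E}(h_j)$ one has
\begin{eqnarray*}
\varphi\diamond\varphi=\sum_{j,k}c_jc_k\,\mathcal{E}(h_j+h_k)=\sum_{j,k}c_jc_k\,e^{-\langle h_j,h_k\rangle_H}\,\mathcal{E}(h_j)\mathcal{E}(h_k),
\end{eqnarray*}
and the Gaussian average that produces the factor $e^{-\langle h_j,h_k\rangle_H}$ is $E'[e^{i\delta(h_j)(w')}e^{i\delta(h_k)(w')}]$ (up to deterministic factors), i.e.\ the \emph{square} $\big(\sum_j d_j(w')\big)^2$ of a complex quantity, not the modulus squared $\big|\sum_j d_j(w')\big|^2$; the modulus squared would generate $e^{+\langle h_j,h_k\rangle_H}$, which is the correction factor for the ordinary product, not the Wick product. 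So ``taking real parts and squaring'' does not produce a nonnegative integrand, and nothing in the proposal replaces this step. The direction of your limit is also wrong: as $\lambda\downarrow 1$ you only recover positivity of $\Phi$ itself, which is strictly weaker than the conclusion, since the Hadamard factor $\{e^{-\langle h_j,h_k\rangle_H}\}_{j,k}$ is in general not positive semi-definite (take $h_2=-h_1$: the $2\times 2$ matrix has determinant $e^{-2\Vert h_1\Vert_H^2}-e^{2\Vert h_1\Vert_H^2}<0$), so positivity of $\Phi$ alone does not make $\{\langle\langle\Phi,\mathcal{E}(h_j+h_k)\rangle\rangle\}_{j,k}$ positive semi-definite.

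The correct use of strong positivity sends $\lambda\to\infty$, not $\lambda\downarrow 1$, and needs no complex exponentials (those enter only in the converse implication, which is why the paper speaks of a characterization only after passing to complex counterparts). Fix $\lambda\geq 1$ and test the positive generalized function $\Gamma(\lambda)\Phi$ against the genuinely nonnegative element $\psi^2\in\mathcal{E}\subset\mathcal{G}$, where $\psi=\sum_j c_j\mathcal{E}(h_j/\lambda)$. Using $\mathcal{E}(g)\mathcal{E}(g')=e^{\langle g,g'\rangle_H}\mathcal{E}(g+g')$, the self-adjointness of $\Gamma(\lambda)$ with respect to the pairing $\langle\langle\cdot,\cdot\rangle\rangle$ (immediate on chaos expansions) and $\Gamma(\lambda)\mathcal{E}(g)=\mathcal{E}(\lambda g)$, one gets
\begin{eqnarray*}
0\leq\langle\langle\Gamma(\lambda)\Phi,\psi^2\rangle\rangle=\sum_{j,k}c_jc_k\,e^{\langle h_j,h_k\rangle_H/\lambda^2}\,\langle\langle\Phi,\mathcal{E}(h_j+h_k)\rangle\rangle .
\end{eqnarray*}
The sum is finite and each $\langle\langle\Phi,\mathcal{E}(h_j+h_k)\rangle\rangle$ is a fixed number independent of $\lambda$, so letting $\lambda\to\infty$ removes the exponential factors and yields $0\leq\sum_{j,k}c_jc_k\langle\langle\Phi,\mathcal{E}(h_j+h_k)\rangle\rangle=\langle\langle\Phi,\varphi\diamond\varphi\rangle\rangle$. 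Your proposal correctly identifies the relevant ingredients ($\Gamma(\lambda)$, its self-adjointness, testing against squares), but without this rescaling-and-limit step it does not constitute a proof.
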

Since $\mathcal{E}$ is dense in $\mathcal{G}$ and the Wick product
is a continuous bilinear operator on that space, condition
(\ref{strong positivity condition}) is equivalent to
\begin{eqnarray*}
\langle\langle \Phi,\varphi\diamond\varphi\rangle\rangle\geq 0\mbox{
for all }\varphi\in\mathcal{G}.
\end{eqnarray*}
In the sequel, strongly positive functions will play the role of
Radon-Nikodym densities of probability measures on
$(W,\mathcal{B}(W))$ with respect to the reference Gaussian measure
$\mu$. We now want to show that measures of this type do not belong
in general to the class of log-concave measures.\\
To this aim consider the finite dimensional abstract Wiener space
given by $H=W=\mathbb{R}^n$, $n\in\mathbb{N}$ and
\begin{eqnarray}\label{finite dimensional}
\mu(A):=\int_{A}(2\pi)^{-\frac{n}{2}}e^{-\frac{|w|^2}{2}}dw,\quad
A\in\mathcal{B}(\mathbb{R}^n),
\end{eqnarray}
where $|\cdot|$ denotes the $n$-th dimensional Euclidean norm.\\
In this framework the class of stochastic exponentials is
represented by the functions
\begin{eqnarray*}
\mathcal{E}(x):=e^{\langle x,w\rangle-\frac{|x|^2}{2}},\quad
w\in\mathbb{R}^n
\end{eqnarray*}
where $x$ is arbitrarily chosen in $\mathbb{R}^n$ and
$\langle\cdot,\cdot\rangle$ denotes, only for this particular
example, the scalar product on $\mathbb{R}^n$.\\
As we mentioned before in Example \ref{simple example} convex
combinations of stochastic exponentials are strongly positive.
Therefore if we define
\begin{eqnarray*}
\varphi(w)=\frac{\mathcal{E}(a)+\mathcal{E}(b)}{2}
\end{eqnarray*}
for some $a,b\in\mathbb{R}^n$ then $\varphi$ is a strongly positive
function. Now, look at $\varphi$ as a Radon-Nicodym density of a
probability measure on $(\mathbb{R}^n,\mathcal{B}(\mathbb{R}^n))$
with respect to the Gaussian measure $\mu$ defined in (\ref{finite
dimensional}); multiplying $\varphi$ by the Gaussian density
appearing in the integral (\ref{finite dimensional}) we will obtain
the density of the above mentioned measure with respect to the
$n$-dimensional Lebesgue measure, i.e.
\begin{eqnarray}\label{non-log-concave}
\varphi(w)\cdot(2\pi)^{-\frac{n}{2}}e^{-\frac{|w|^2}{2}}&=&
\frac{\mathcal{E}(a)+\mathcal{E}(b)}{2}\cdot(2\pi)^{-\frac{n}{2}}e^{-\frac{|w|^2}{2}}\nonumber\\
&=&\frac{e^{\langle a,w\rangle-\frac{|a|^2}{2}}+e^{\langle
b,w\rangle-\frac{|b|^2}{2}}}{2}\cdot(2\pi)^{-\frac{n}{2}}e^{-\frac{|w|^2}{2}}\nonumber\\
&=&(2\pi)^{-\frac{n}{2}}\frac{e^{-\frac{|w-a|^2}{2}}+e^{-\frac{|w-b|^2}{2}}}{2}
\end{eqnarray}
If $a\neq b$ then the function appearing in the last term of the
above chain of equality is not log-concave, i.e. it is not the exponential of a concave function.\\
This shows that probability measures with strongly positive
densities with respect to the standard Gaussian measure do not
belong in general to the class of log-concave measures.

\section{Main results}

We are now ready to state and prove the first main result of the
present paper. In the sequel the symbol $E_{\nu}[\cdot]$ will denote
the expectation with respect to the measure $\nu d\mu$.
\begin{theorem}\label{main theorem}
Let $\nu\in\mathcal{G}_{\sqrt{2}}$ be strongly positive. Then for
any $F\in\mathbb{D}^{1,3}$ we have
\begin{eqnarray}\label{main poincare}
0\leq E_{\nu}[|F|^2]-\langle\langle F\diamond
F,\nu\rangle\rangle\leq E_{\nu}[\Vert DF\Vert_H^2].
\end{eqnarray}
\end{theorem}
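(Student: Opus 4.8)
The plan is to reduce the inequality to the pointwise Wick identity \eqref{idea} (in its abstract Wiener space form) combined with the characterizing property \eqref{charact} of the Wick product and the notion of strong positivity. First I would establish the abstract analogue of \eqref{idea}: for $F\in\mathbb{D}^{1,2}$ (or a suitable dense subclass) one has the identity
\begin{eqnarray*}
F\diamond F = F^2 - \langle DF,DF\rangle_H + R,
\end{eqnarray*}
where $R$ collects the higher-order Wick/derivative terms and is itself a \emph{positive} generalized function — indeed a sum of squares $\sum_{l\geq 2}\frac{(-1)^l}{l!}\|D^lF\|^2_{H^{\otimes l}}$ rearranged via the chaos decomposition so that $R = F^2 - F\diamond F - \langle DF,DF\rangle_H$ is nonnegative. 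The cleanest route is to verify everything first on $F\in\mathcal{E}$, the linear span of stochastic exponentials, where the Wick product is defined algebraically and all series are finite/convergent, and then pass to the limit using density of $\mathcal{E}$ in $\mathbb{D}^{1,3}$ and the mapping properties of $\diamond$ on $\mathcal{G}^*$.

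Next I would test this identity against $\nu$. Since $\nu\in\mathcal{G}_{\sqrt 2}$ and $F\diamond F\in\mathcal{G}_{1/\sqrt 2}$ by \eqref{Wick product in G^*} for $F\in\mathcal{L}^2(W,\mu)$, the pairing $\langle\langle F\diamond F,\nu\rangle\rangle$ is well-defined; this is where the hypothesis $\nu\in\mathcal{G}_{\sqrt2}$ is used, and one must check $F\in\mathbb{D}^{1,3}$ suffices to make the relevant expectations ($E_\nu[|F|^2]$, $E_\nu[\|DF\|_H^2]$) finite — this is the role of the exponent $3$, via Hölder with $\nu\in\mathcal{L}^{3/2}$ coming from $\mathcal{G}_{\sqrt2}\subset\mathcal{L}^2$. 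Pairing the identity with $\nu$ gives
\begin{eqnarray*}
E_\nu[F^2] - \langle\langle F\diamond F,\nu\rangle\rangle = E_\nu[\|DF\|_H^2] - \langle\langle R,\nu\rangle\rangle.
\end{eqnarray*}
The upper bound in \eqref{main poincare} follows immediately provided $\langle\langle R,\nu\rangle\rangle\geq 0$; the lower bound follows provided $\langle\langle F\diamond F,\nu\rangle\rangle\leq E_\nu[F^2]$, equivalently $\langle\langle R,\nu\rangle\rangle + E_\nu[\|DF\|_H^2]\geq\langle\langle R,\nu\rangle\rangle$ which is automatic, or more directly from $\langle\langle F\diamond F,\nu\rangle\rangle\geq 0$ — wait, that is not what is needed; rather the lower bound $0\leq E_\nu[F^2]-\langle\langle F\diamond F,\nu\rangle\rangle$ needs $\langle\langle(F^2 - F\diamond F),\nu\rangle\rangle\geq 0$, and since $F^2 - F\diamond F = \langle DF,DF\rangle_H - R + (\text{the }l=1\text{ term already absorbed})$, one sees $F^2 - F\diamond F = \sum_{l\geq1}\frac{(-1)^{l+1}}{l!}\|D^lF\|^2$, so I instead want to show this whole alternating sum pairs nonnegatively against $\nu$.

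The key observation making both bounds work simultaneously is strong positivity. By Theorem~\ref{characterization theorem for strong positivity}, $\langle\langle\nu,\varphi\diamond\varphi\rangle\rangle\geq 0$ for all $\varphi\in\mathcal{G}$. Applying this with $\varphi = F$ gives $\langle\langle\nu,F\diamond F\rangle\rangle\geq 0$; but more is true — applying it with $\varphi = \Gamma(\lambda)^{1/2}$-type modifications, or rather using that $\Gamma(\lambda)\nu$ is positive for all $\lambda\geq1$, one controls the tail of the alternating series. Concretely, $E_\nu[F^2] - \langle\langle F\diamond F,\nu\rangle\rangle = \langle\langle \nu, F^2 - F\diamond F\rangle\rangle$, and by the Wick identity this equals $\sum_{l\geq1}\frac{(-1)^{l+1}}{l!}\langle\langle\nu,\|D^lF\|^2\rangle\rangle$; truncating after $l=1$ gives the upper bound and the nonnegativity is the lower bound, but the alternating tail must be shown to have the right sign when paired against the \emph{strongly positive} $\nu$ — this is exactly where strong positivity (not mere positivity) is indispensable, because it guarantees $\langle\langle\nu, G\diamond G\rangle\rangle\geq0$ and hence, writing the relevant truncated remainders as Wick squares, that both truncations of the alternating series bracket the true value. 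I expect the main obstacle to be precisely this last point: rigorously writing the tail remainder of the Wick identity as a Wick-square (or a positive combination thereof) so that strong positivity of $\nu$ applies, and justifying all the limiting/convergence steps in $\mathcal{G}^*$ that let one move from the finite computation on $\mathcal{E}$ to general $F\in\mathbb{D}^{1,3}$ with the $\mathcal{L}^{3/2}$-integrability of $\nu$ doing the analytic bookkeeping.
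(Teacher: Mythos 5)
Your overall architecture (verify on $\mathcal{E}$ or on polynomials, use a Wick-product identity, invoke strong positivity through $\langle\langle\nu,\varphi\diamond\varphi\rangle\rangle\geq 0$, then pass to $\mathbb{D}^{1,3}$ by density) is the right one, and your treatment of the \emph{lower} bound is essentially the paper's: the one correction needed is that the identity you must use is the dual form
$F^2=F\diamond F+\sum_{j\geq 1}\frac{1}{j!}\sum_k\langle D^jF,e_k^{(j)}\rangle^{\diamond 2}$,
in which the derivative terms appear as \emph{Wick} squares with \emph{positive} coefficients; pairing that with a strongly positive $\nu$ immediately gives $E_\nu[F^2]\geq\langle\langle F\diamond F,\nu\rangle\rangle$. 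Your version, with ordinary squares $\Vert D^lF\Vert^2$ and alternating signs as in (\ref{idea}), does not connect to strong positivity at all, and your remainder $R=\sum_{l\geq 2}\frac{(-1)^l}{l!}\Vert D^lF\Vert^2$ is an alternating series, not ``a sum of squares''; its nonnegativity (pointwise or against $\nu$) is exactly what is in question, not something you may assert.

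That is the genuine gap: for the \emph{upper} bound you need $\langle\langle R,\nu\rangle\rangle\geq 0$, you correctly identify this as ``the main obstacle,'' and you never resolve it. The paper does not resolve it either in your form --- it sidesteps it entirely. For $F=\sum_{k=1}^n\lambda_k\mathcal{E}(h_k)$ one computes
$E_\nu[F\diamond F-F^2+\Vert DF\Vert_H^2]=\sum_{j,k}\lambda_j\lambda_k\,a_{jk}b_{jk}$
with $a_{jk}=E_\nu[\mathcal{E}(h_j)\diamond\mathcal{E}(h_k)]$ and $b_{jk}=1-e^{\langle h_j,h_k\rangle_H}+e^{\langle h_j,h_k\rangle_H}\langle h_j,h_k\rangle_H$; the matrix $(a_{jk})$ is positive semi-definite precisely by strong positivity of $\nu$, the matrix $(b_{jk})$ is positive semi-definite because the \emph{classical} Poincar\'e inequality applied to $\sum\lambda_k\mathcal{E}(h_k)$ says so, and the Schur (Hadamard) product theorem makes the double sum nonnegative. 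Without this (or an equivalent device, e.g.\ a telescoping of the dual identity applied to the components of $DF$), your proof of the upper bound is not complete. A secondary but real error is the integrability bookkeeping: $\nu\in\mathcal{L}^{3/2}(W,\mu)$ does not let you pair $\nu$ with $|F|^2$ for $F\in\mathcal{L}^3$ (that would require $F\in\mathcal{L}^6$); the hypothesis $\nu\in\mathcal{G}_{\sqrt 2}$ is used through Nelson hypercontractivity, $\Vert\nu\Vert_3=\Vert\Gamma(1/\sqrt2)\Gamma(\sqrt2)\nu\Vert_3\leq\Vert\Gamma(\sqrt2)\nu\Vert_2<\infty$, so that $\nu\in\mathcal{L}^3$ and H\"older with exponents $3/2,3$ applies to $E[|F|^2\nu]$.
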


\noindent Before proving the theorem above we would like to justify
through a simple example the appearance of the quantity
$\langle\langle F\diamond
F,\nu\rangle\rangle$ in (\ref{main poincare}).\\
Consider the function in (\ref{non-log-concave}) which was shown to
be an admissible candidate for our Poincar\'e inequality but not
fitting in the framework of the Brascamp-Lieb inequality (\ref{BL
introduction}) since it not log-cancave. The function in
(\ref{non-log-concave}) is a convex combination of the two
log-concave densities
\begin{eqnarray*}
p(w-a)=(2\pi)^{-\frac{n}{2}}e^{-\frac{|w-a|^2}{2}}\quad\mbox{ and
}\quad p(w-b)=(2\pi)^{-\frac{n}{2}}e^{-\frac{|w-b|^2}{2}},
\end{eqnarray*}
where we set $p(w):=(2\pi)^{-\frac{n}{2}}e^{-\frac{|w-b|^2}{2}}$.
Therefore the measures
\begin{eqnarray*}
d\nu_a(w):=p(w-a)dw\quad\mbox{ and }\quad d\nu_b(w):=p(w-b)dw
\end{eqnarray*}
satisfy
\begin{eqnarray*}
\int_{\mathbb{R}^n}f^2(w)d\nu_a(w)-\Big(\int_{\mathbb{R}^n}f(w)d\nu_a(w)\Big)^2\leq
\int_{\mathbb{R}^n}|\nabla f(w)|^2d\nu_a(w)
\end{eqnarray*}
and
\begin{eqnarray*}
\int_{\mathbb{R}^n}f^2(w)d\nu_b(w)-\Big(\int_{\mathbb{R}^n}f(w)d\nu_b(w)\Big)^2\leq
\int_{\mathbb{R}^n}|\nabla f(w)|^2d\nu_b(w)
\end{eqnarray*}
respectively. Now sum the two inequalities above and divide by two
to obtain
\begin{eqnarray}\label{convex}
\int_{\mathbb{R}^n}f^2(w)d\nu(w)-\frac{\Big(\int_{\mathbb{R}^n}f(w)d\nu_a(w)\Big)^2+
\Big(\int_{\mathbb{R}^n}f(w)d\nu_b(w)\Big)^2}{2}\leq
\int_{\mathbb{R}^n}|\nabla f(w)|^2d\nu(w)
\end{eqnarray}
where $\nu=\frac{\nu_a+\nu_b}{2}$. Moreover
\begin{eqnarray*}
\Big(\int_{\mathbb{R}^n}f(w)d\nu_a(w)\Big)^2&=&\Big(\int_{\mathbb{R}^n}f(w)p(w-a)dw\Big)^2\\
&=&\Big(\int_{\mathbb{R}^n}f(w)e^{\langle
w,a\rangle-\frac{|a|^2}{2}}p(w)dw\Big)^2\\
&=&\int_{\mathbb{R}^n}(f\diamond f)(w)e^{\langle
w,a\rangle-\frac{|a|^2}{2}}p(w)dw\\
&=&\int_{\mathbb{R}^n}(f\diamond f)(w)d\nu_a(w).
\end{eqnarray*}
Here we used the characterizing property of the Wick product
(\ref{charact}) and the fact that the function $e^{\langle
w,a\rangle-\frac{|a|^2}{2}}$ plays the role of the stochastic
exponential in this framework. If we do the same for the term
$\Big(\int_{\mathbb{R}^n}f(w)d\nu_b(w)\Big)^2$, we can rewrite
inequality (\ref{convex}) as
\begin{eqnarray*}
\int_{\mathbb{R}^n}f^2(w)d\nu(w)-\int_{\mathbb{R}^n}(f\diamond
f)(w)d\nu(w)\leq \int_{\mathbb{R}^n}|\nabla f(w)|^2d\nu(w),
\end{eqnarray*}
which is a particular case of (\ref{main poincare}).\\

\begin{proof}
First of all we observe that since
$F\in\mathbb{D}^{1,3}\subset{L}^2(W,\mu)$ the Wick product
$F\diamond F$ appearing in (\ref{main poincare}) belongs to
$\mathcal{G}_{\frac{1}{\sqrt{2}}}$ (see (\ref{Wick product in G^*}))
and hence the dual pairing $\langle\langle F\diamond
F,\nu\rangle\rangle$ is well defined (according to the assumptions
on $\nu$). \\
We divide the proof in two parts.\\

\noindent\textsc{Second inequality}\\

\noindent Define the map
\begin{eqnarray}\label{T}
T:\mathcal{E}&\to&\mathcal{E}\nonumber\\
F&\mapsto& T(F):=F\diamond F-F^2+\Vert DF\Vert_H^2.
\end{eqnarray}
Since $F\in\mathcal{E}$ we can write
$F=\sum_{k=1}^n\lambda_k\mathcal{E}(h_k)$ for some
$\lambda_1,...,\lambda_n\in\mathbb{R}$ and $h_1,...,h_n\in H$. Now
substitute this expression into (\ref{T}) to obtain
\begin{eqnarray*}
T(F)&=&\sum_{j,k=1}^n\lambda_j\lambda_k\mathcal{E}(h_j)\diamond\mathcal{E}(h_k)-
\sum_{j,k=1}^n\lambda_j\lambda_k\mathcal{E}(h_j)\cdot\mathcal{E}(h_k)\\
&&+\sum_{j,k=1}^n\lambda_j\lambda_k\mathcal{E}(h_j)\cdot\mathcal{E}(h_k)\langle
h_j,h_k\rangle_H\\
&=&\sum_{j,k=1}^n\lambda_j\lambda_k\mathcal{E}(h_j)\diamond\mathcal{E}(h_k)-
\sum_{j,k=1}^n\lambda_j\lambda_k\mathcal{E}(h_j)\diamond\mathcal{E}(h_k)e^{\langle h_j,h_k\rangle_H}\\
&&+\sum_{j,k=1}^n\lambda_j\lambda_k\mathcal{E}(h_j)\diamond\mathcal{E}(h_k)e^{\langle
h_j,h_k\rangle_H}\langle
h_j,h_k\rangle_H\\
&=&\sum_{j,k=1}^n\lambda_j\lambda_k\mathcal{E}(h_j)\diamond\mathcal{E}(h_k)\Big(1-e^{\langle
h_j,h_k\rangle_H}+e^{\langle h_j,h_k\rangle_H}\langle
h_j,h_k\rangle_H\Big).
\end{eqnarray*}
Take the expectation with respect to the measure $\nu d\mu$ of the
first and last terms of the previous chain of equalities to obtain
\begin{eqnarray}\label{inequality}
E_{\nu}[T(F)]&=&\sum_{j,k=1}^n\lambda_j\lambda_kE_{\nu}[\mathcal{E}(h_j)\diamond\mathcal{E}(h_k)]\Big(1-e^{\langle
h_j,h_k\rangle_H}+e^{\langle h_j,h_k\rangle_H}\langle
h_j,h_k\rangle_H\Big)\nonumber\\
&=&\sum_{j,k=1}^n\lambda_j\lambda_ka_{jk}b_{jk},
\end{eqnarray}
where for $j,k\in\{1,...,n\}$ we set
\begin{eqnarray*}
a_{jk}:=E_{\nu}[\mathcal{E}(h_j)\diamond\mathcal{E}(h_k)]
\end{eqnarray*}
and
\begin{eqnarray*}
b_{jk}:=1-e^{\langle h_j,h_k\rangle_H}+e^{\langle
h_j,h_k\rangle_H}\langle h_j,h_k\rangle_H.
\end{eqnarray*}
Observe that the matrix $B=\{b_{jk}\}_{1\leq j,k\leq n}$ is positive
semi-definite; in fact, if in the classical Poincar\'e inequality
\begin{eqnarray*}
E[F^2]-E[F]^2\leq E[\Vert DF\Vert_H^2],
\end{eqnarray*}
we take $F$ to be $\sum_{k=1}^n\lambda_k\mathcal{E}(h_k)$ one gets
\begin{eqnarray*}
\sum_{j,k=1}^n\lambda_j\lambda_k(e^{\langle h_j,h_k\rangle_H}-1)\leq
\sum_{j,k=1}^n\lambda_j\lambda_ke^{\langle h_j,h_k\rangle_H}\langle
h_j,h_k\rangle_H,
\end{eqnarray*}
which corresponds exactly to what we are claiming. On the other
hand, the matrix $A=\{a_{jk}\}_{1\leq j,k\geq n}$ is positive
semi-definite since we are assuming $\nu$ to be strongly
positive (see (\ref{strong positivity condition})).\\
Therefore the matrix $A\circ B:=\{a_{jk}\cdot b_{jk}\}_{1\leq
j,k\leq n}$ (which corresponds to the Hadamard product of the matrix
$A$ with the matrix $B$) is also positive semi-definite (see for
instance Styan \cite{S}), that means
\begin{eqnarray*}
\sum_{j,k=1}^n\lambda_j\lambda_ka_{jk}b_{jk}\geq 0,
\end{eqnarray*}
for any $\lambda_1,...,\lambda_n\in\mathbb{R}$. From
(\ref{inequality}) this corresponds to
\begin{eqnarray*}
E_{\nu}[T(F)]\geq 0\mbox{ for all }F\in\mathcal{E}.
\end{eqnarray*}
From the definition of $T$ this is equivalent to
\begin{eqnarray}\label{approximated inequality}
E_{\nu}[F\diamond F]-E_{\nu}[F^2]+E_{\nu}[\Vert DF\Vert_H^2]\geq 0.
\end{eqnarray}
Therefore we have proved the second inequality in (\ref{main
poincare}) for $F\in\mathcal{E}$ (recall that $\langle\langle
F\diamond F,\nu\rangle\rangle=E_{\nu}[F\diamond F]$ since $F\diamond
F\in\mathcal{L}^2(W,\mu)$ for $F\in\mathcal{E}$).\\
The next step is to extend the validity of (\ref{approximated
inequality}) to the whole $\mathbb{D}^{1,3}$. Since the expectations
in (\ref{approximated inequality}) are taken with respect to the
measure $\nu d\mu$, we need to control the norms in
$\mathcal{L}^2(W,\nu d\mu)$ with the norms in $\mathcal{L}^3(W,\mu)$
(to exploit the density of $\mathcal{E}$ in $\mathbb{D}^{1,3}$).
By the Nelson's hyper-contractivity theorem
and the assumptions on $\nu$ we have that
\begin{eqnarray*}
E[|\nu|^3]^{\frac{1}{3}}&=&E[|\Gamma(1/\sqrt{2})\Gamma(\sqrt{2})\nu|^3]^{\frac{1}{3}}\\
&\leq&E[|\Gamma(\sqrt{2})\nu|^2]^{\frac{1}{2}}\\
&<&+\infty
\end{eqnarray*}
showing that $\nu\in\mathcal{L}^3(W,\mu)$. Hence by the H\"older inequality we deduce
\begin{eqnarray*}
E_{\nu}[|F|^2]^{\frac{1}{2}}&=&E[|F|^2\cdot\nu]^{\frac{1}{2}}\\
&\leq&(E[|F|^3]^{\frac{2}{3}}\cdot E[|\nu|^3]^{\frac{1}{3}})^{\frac{1}{2}}\\
&=&CE[|F|^3]^{\frac{1}{3}}
\end{eqnarray*}
This shows that if $F_n\in\mathcal{E}$ converges to $F$ in
$\mathbb{D}^{1,3}$ then $E_{\nu}[F_n^2]$ and $E_{\nu}[\Vert
DF_n\Vert_H^2]$ converge respectively to $E_{\nu}[F^2]$ and
$E_{\nu}[\Vert DF\Vert_H^2]$. Moreover since  $E_{\nu}[F_n\diamond
F_n]=\langle\langle F_n\diamond F_n,\nu\rangle\rangle$ and
convergence in $\mathbb{D}^{1,3}$ implies convergence in
$\mathcal{G}^*$ (where the Wick product is continuous) we can
conclude that $E_{\nu}[F_n\diamond F_n]$ converges to
$\langle\langle F\diamond F,\nu\rangle\rangle$.\\
We can therefore extend the validity of (\ref{approximated
inequality}) to the whole $\mathbb{D}^{1,3}$.\\

\noindent\textsc{First inequality}\\

\noindent Let $F$ be of the form $p(\delta(h_1),...,\delta(h_n))$
where $p:\mathbb{R}^n\to\mathbb{R}$ is a polynomial and
$h_1,...,h_n\in H$. Functions of this type belong to $\mathcal{G}$
and they are dense in $\mathcal{L}^p(W,\mu)$ and $\mathbb{D}^{k,p}$
for any $p\geq 1$ and $k\in\mathbb{N}$. Then we can write
\begin{eqnarray}\label{formula wick-dot}
F\cdot F=F\diamond F+\sum_{j\geq 1}\frac{1}{j!}\sum_{k\geq 1}\langle
D^jF,e^{(j)}_k\rangle_{H^{\otimes j}}\diamond\langle
D^jF,e^{(j)}_k\rangle_{H^{\otimes j}},
\end{eqnarray}
where for any $j\geq 1$, $\{e^{(j)}_k\}_{k\geq 1}$ is an orthonormal
basis of $H^{\otimes j}$.\\
Formula (\ref{formula wick-dot}) is the dual of (\ref{idea}); it
also appeared for the first time in \cite{NZ} and independently in
\cite{HO} without proof; see \cite{HY} and \cite{L} for the
proof.\\
Observe that the sum over $j$ is finite since $p$ is a polynomial;
moreover rewriting $p(\delta(h_1),...,\delta(h_n))$ as
$\tilde{p}(\delta(\tilde{h}_1),...,\delta(\tilde{h}_k))$ with $k\leq
n$, $\tilde{p}$ a polynomial and $\{\tilde{h}_1,...,\tilde{h}_k\}$
an orthonormal basis for $\mbox{span}\{h_1,...,h_n\}$ one can make
also the sum over $k$ to be finite (choosing proper basis
$\{e^{(j)}_k\}_{k\geq 1}$). Hence
\begin{eqnarray*}
E_{\nu}[F^2]&=&E_{\nu}[F\diamond F]+\sum_{j\geq
1}\frac{1}{j!}\sum_{k\geq 1}E_{\nu}[(\langle
D^jF,e^{(j)}_k\rangle_{H^{\otimes j}})^{\diamond 2}]\\
&\geq& E_{\nu}[F\diamond F],
\end{eqnarray*}
since all the terms inside the last sum are non negative (due to the
strong positivity of $\nu$ (see \ref{strong positivity condition})).
This shows that
\begin{eqnarray*}
E_{\nu}[F^2]-E_{\nu}[F\diamond F]\geq 0
\end{eqnarray*}
or equivalently
\begin{eqnarray*}
E_{\nu}[F^2]-\langle\langle F\diamond F,\nu\rangle\rangle\geq 0.
\end{eqnarray*}
for any $F$ of polynomial type. The density of these functions in
$\mathbb{D}^{1,3}$, the continuity of the Wick product in
$\mathcal{G}^*$ and the assumptions on $\nu$ complete the proof.
\end{proof}

\begin{remark}
Theorem \ref{main theorem} is a generalization of the classical
Poincar\'e inequality on an abstract Wiener space, i.e.
\begin{eqnarray}\label{classical poincare}
E[F^2]-E[F]^2\leq E[\Vert DF\Vert_H^2],\quad F\in\mathbb{D}^{1,2}.
\end{eqnarray}
In fact, choosing $\nu=1$ in (\ref{main poincare}), which clearly is
strongly positive and belongs to $\mathcal{G}_{\sqrt{2}}$, one gets
precisely (\ref{classical poincare}) (recall equality (\ref{Wick
expectation}) and the fact for $\nu=1$ we can replace $\mathbb{D}^{1,3}$ with $\mathbb{D}^{1,2}$). Moreover, the family of probability measures for
which our Poincar\'e inequality is proved, namely measures of the
type $\nu d\mu$ with $\nu$ strongly positive, is in general not
included in the class of log-concave measures, which is the class of
probability measure considered by Brascamp and Lieb \cite{BL} (see
the example at the end of Section 3) and Feyel and \"Ust\"unel
\cite{FU} (in the abstract Wiener space setting).
\end{remark}

\begin{remark}
If $F\in\mathbb{D}^{1,3}$ and $\nu$ is a strongly positive element
of $\mathcal{G}_{\sqrt{2}}$ then we have
\begin{eqnarray*}
(E_{\nu}[F])^2\leq \langle\langle F\diamond F,\nu\rangle\rangle\leq
E_{\nu}[F^2].
\end{eqnarray*}
The second inequality is part of Theorem \ref{main theorem}. The
first inequality is easily obtained as follows:
\begin{eqnarray*}
0&\leq& E_{\nu}[(\varphi-E_{\nu}[\varphi])\diamond
(\varphi-E_{\nu}[\varphi])]\\
&=&E_{\nu}[\varphi\diamond\varphi]-(E_{\nu}[\varphi])^2.
\end{eqnarray*}
Here $\varphi\in\mathcal{E}$ and we utilized the property that
$\varphi\diamond\psi=\varphi\cdot\psi$ if $\varphi$ or $\psi$ is
constant. Therefore
\begin{eqnarray*}
E_{\nu}[F^2]-\langle\langle F\diamond F,\nu\rangle\rangle&\leq&
E_{\nu}[F^2]-(E_{\nu}[F])^2\\
&=&Var_{\nu}(F)
\end{eqnarray*}
where $Var_{\nu}(F)$ stands for the variance of $F$ under the
measure $\nu d\mu$.
\end{remark}

\begin{remark}
Theorem \ref{main theorem} has been proved for functions in $\mathbb{D}^{1,3}$; the natural space would be $\mathbb{D}_{\nu}^{1,2}$ 
where we mean the Sobolev space with respect to the measure $\nu d\mu$. This space is well defined through the closability of the gradient operator with respect to the norm
in $\mathcal{L}^2(W,\nu d\mu)$ which is however not known in the general case. This is why for instance the Poincar\'e inequality for log-concave measures on abstract Wiener spaces obtained in \cite{FU} is proved only for cilindrical smooth functions.
\end{remark}

\subsection{A refinement on the classical Wiener space}

In this section we are going to generalize Theorem \ref{main
theorem} in the spirit of the Poincar\'e type inequality obtained by
Houdr\'e and Kagan in \cite{HK} for the finite dimensional case and
later on by Houdr\'e and P\'erez-Abreu in \cite{HPA} for the
classical Wiener space. More precisely, in the paper \cite{HPA} it
is proved that on the classical Wiener space the following
inequality holds for any $F\in\mathbb{D}^{2k,2}$ with
$k\in\mathbb{N}$:
\begin{eqnarray}\label{HK}
\sum_{l=1}^{2k}\frac{(-1)^{l+1}}{l!} E[\Vert D^lF\Vert_{H^{\otimes
l}}^2]\leq E[|F|^2]-E[F]^2\leq
\sum_{l=1}^{2k-1}\frac{(-1)^{l+1}}{l!} E[\Vert D^lF\Vert_{H^{\otimes
l}}^2].
\end{eqnarray}
Its proof relies on iterations of the Clark-Ocone formula (see
\cite{Nualart}) and we do not know whether inequality (\ref{HK}) is
valid in the abstract Wiener space setting; since the proof of our
next theorem will rely on the validity of inequality (\ref{HK}) we
are forced to work in the framework of the classical Wiener space.\\
We will generalize the second inequality in (\ref{HK}) replacing the
expectations $E[\cdot]$ with $E_{\nu}[\cdot]$, where $\nu$ is a
strongly positive element of $\mathcal{G}_{\sqrt{2}}$, and the term
$E[F]^2$ with $\langle\langle F\diamond F,\nu\rangle\rangle$. As
before, these two last expressions coincide for $\nu=1$.
\begin{theorem}\label{main theorem 2}
Let $(H,W,\mu)$ be the classical Wiener space and let
$\nu\in\mathcal{G}_{\sqrt{2}}$ be strongly positive. Then for any
$k\in\mathbb{N}$ and $F\in\mathbb{D}^{2k-1,3}$ we have
\begin{eqnarray}\label{main poincare 2}
0\leq E_{\nu}[|F|^2]-\langle\langle F\diamond
F,\nu\rangle\rangle\leq \sum_{l=1}^{2k-1}\frac{(-1)^{l+1}}{l!}
E_{\nu}[\Vert D^lF\Vert_{H^{\otimes l}}^2].
\end{eqnarray}
\end{theorem}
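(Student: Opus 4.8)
The plan is to combine the mechanism used to prove the first inequality in Theorem~\ref{main theorem} with the iterated Poincar\'e inequality (\ref{HK}), using strong positivity to turn each term of the expansion (\ref{formula wick-dot}) into a non-negative quantity. More precisely, I would first establish the lower bound $0\le E_\nu[|F|^2]-\langle\langle F\diamond F,\nu\rangle\rangle$ exactly as in Theorem~\ref{main theorem}: for $F$ of polynomial type the identity (\ref{formula wick-dot}) gives
\begin{eqnarray*}
E_{\nu}[F^2]-\langle\langle F\diamond F,\nu\rangle\rangle=\sum_{j\ge 1}\frac{1}{j!}\sum_{k\ge 1}E_{\nu}[(\langle D^jF,e^{(j)}_k\rangle_{H^{\otimes j}})^{\diamond 2}]\ge 0,
\end{eqnarray*}
since each Wick square is paired against the strongly positive $\nu$; density of polynomial functions in $\mathbb{D}^{2k-1,3}$ and continuity of the Wick product in $\mathcal{G}^*$ (together with $\nu\in\mathcal{L}^3$ as shown in the proof of Theorem~\ref{main theorem}) extend this to all $F\in\mathbb{D}^{2k-1,3}$.

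For the upper bound I would proceed by induction on $k$, mimicking the structure of the Houdr\'e--Kagan argument but carrying the measure $\nu\,d\mu$ and the Wick correction along. The base case $k=1$ is precisely Theorem~\ref{main theorem}. For the inductive step, the natural idea is to apply the $k=1$ inequality not to $F$ itself but to its ``first-order remainder'': one writes, via (\ref{formula wick-dot}) applied to $F$,
\begin{eqnarray*}
E_{\nu}[F^2]-\langle\langle F\diamond F,\nu\rangle\rangle=E_{\nu}[\|DF\|_H^2]-\Bigl(E_{\nu}[\|DF\|_H^2]-\sum_{j\ge 2}\tfrac{1}{j!}\sum_k E_{\nu}[(\langle D^jF,e^{(j)}_k\rangle)^{\diamond 2}]\Bigr),
\end{eqnarray*}
and then recognizes the parenthesized term as a quantity of the same shape as the left-hand side, but one order higher --- namely applied componentwise to the $H$-valued function $DF$. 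Running the induction hypothesis on each scalar component $\langle DF,e_k\rangle\in\mathbb{D}^{2k-3,3}$ (summed against the orthonormal basis of $H$), which contributes derivatives $D^lF$ for $l=1,\dots,2k-2$ shifted up by one index, and telescoping the alternating sums, should reproduce the truncated series $\sum_{l=1}^{2k-1}\frac{(-1)^{l+1}}{l!}E_\nu[\|D^lF\|^2_{H^{\otimes l}}]$ on the right. The inequality (\ref{HK}), which provides the sign control on the remainder after an even number of steps, is invoked exactly at the point where one bounds the deepest remainder term.

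The main obstacle, and the place where I expect to spend most of the effort, is the interplay between two different ``expansions'': the Wick-to-ordinary-product identity (\ref{formula wick-dot}), which governs the $\langle\langle F\diamond F,\nu\rangle\rangle$ correction, and the Clark--Ocone/iteration mechanism behind (\ref{HK}), which governs the alternating derivative sum. One must check that when $\nu=1$ these two collapse consistently to the classical statement, and more importantly that for general strongly positive $\nu$ each intermediate remainder can still be written as a sum of $E_\nu$-expectations of \emph{Wick squares} of iterated derivatives --- only then does strong positivity (via (\ref{strong positivity condition})) give the required sign. Bookkeeping the orthonormal bases $\{e^{(j)}_k\}$ across the induction (so that the sums stay finite on polynomial $F$ and the basis choices are compatible) is the routine-but-delicate part. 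Once the algebraic identity underlying the induction is pinned down on polynomial functions, the closure argument is identical to the one in Theorem~\ref{main theorem}: control $\mathcal{L}^2(W,\nu\,d\mu)$-norms by $\mathcal{L}^3(W,\mu)$-norms through H\"older and $\nu\in\mathcal{L}^3(W,\mu)$, use density of polynomial functions in $\mathbb{D}^{2k-1,3}$, and use continuity of the Wick product on $\mathcal{G}^*$ to pass to the limit in $\langle\langle F\diamond F,\nu\rangle\rangle$.
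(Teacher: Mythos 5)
Your treatment of the first inequality is exactly the paper's: expand via (\ref{formula wick-dot}), use strong positivity to make each Wick-square term non-negative, and close up by density. For the second inequality, however, what you have is a plan rather than a proof, and the one concrete step you display is already off: by (\ref{formula wick-dot}) the quantity $E_\nu[F^2]-\langle\langle F\diamond F,\nu\rangle\rangle$ equals $\sum_{j\ge1}\frac{1}{j!}\sum_k E_\nu[(\langle D^jF,e^{(j)}_k\rangle)^{\diamond2}]$, whose $j=1$ contribution is a sum of \emph{Wick} squares, not the ordinary square $E_\nu[\Vert DF\Vert_H^2]$; your displayed identity silently identifies the two and is therefore false as written. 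More seriously, the place where you propose to ``invoke (\ref{HK}) to bound the deepest remainder'' is exactly where your induction would break: (\ref{HK}) is only known under the Gaussian measure $\mu$ (its proof uses Clark--Ocone), while your iteration carries the measure $\nu\,d\mu$ throughout, so what you would need there is a $\nu$-weighted version of (\ref{HK}) --- essentially the statement being proved. The telescoping you gesture at can in fact be completed \emph{without} (\ref{HK}): setting $R_j:=\frac{1}{j!}\sum_k E_\nu[(\langle D^jF,e^{(j)}_k\rangle)^{\diamond2}]\ge0$, a second application of (\ref{formula wick-dot}) to the components of $D^jF$ gives the recursion $R_j=\frac{1}{j!}E_\nu[\Vert D^jF\Vert^2_{H^{\otimes j}}]-\sum_{i\ge1}\binom{i+j}{j}R_{i+j}$, and the identity $\sum_{l=0}^{N}(-1)^{l}\binom{m}{l}=(-1)^{N}\binom{m-1}{N}$ yields $\sum_{j\ge1}R_j=\sum_{l=1}^{N}\frac{(-1)^{l+1}}{l!}E_\nu[\Vert D^lF\Vert^2]+(-1)^{N}\sum_{m>N}\binom{m-1}{N}R_m$, which has the right sign for $N=2k-1$ by strong positivity alone. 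But none of this bookkeeping is in your write-up, and it is precisely the non-routine part.

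For comparison, the paper's proof takes a different and much shorter route, reusing the Hadamard-product device from Theorem \ref{main theorem}. One evaluates $T(F):=F\diamond F-F^2+\sum_{l=1}^{2k-1}\frac{(-1)^{l+1}}{l!}\Vert D^lF\Vert^2_{H^{\otimes l}}$ on $F=\sum_j\lambda_j\mathcal{E}(h_j)$ and finds $E_\nu[T(F)]=\sum_{j,m}\lambda_j\lambda_m a_{jm}b_{jm}$ with $a_{jm}=E_\nu[\mathcal{E}(h_j)\diamond\mathcal{E}(h_m)]$ and $b_{jm}=1-e^{\langle h_j,h_m\rangle_H}+\sum_{l=1}^{2k-1}\frac{(-1)^{l+1}}{l!}e^{\langle h_j,h_m\rangle_H}\langle h_j,h_m\rangle_H^l$. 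The matrix $(b_{jm})$ is positive semi-definite because it is exactly the quadratic form produced by the second inequality of (\ref{HK}) applied \emph{under $\mu$} to linear combinations of stochastic exponentials; $(a_{jm})$ is positive semi-definite by strong positivity; the Schur product theorem then gives $E_\nu[T(F)]\ge0$, and density finishes. This is the mechanism that lets (\ref{HK}) be used only where it is actually available. To repair your argument you should either carry out the combinatorial telescoping above in full (which, as a bonus, would not need (\ref{HK}) and hence not the restriction to the classical Wiener space), or switch to the paper's positive semi-definite matrix argument.
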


\begin{proof}
The proof is similar to one of Theorem \ref{main theorem}; one has
to start with the map
\begin{eqnarray}\label{T2}
T:\mathcal{E}&\to&\mathcal{E}\nonumber\\
F&\mapsto& T(F):=F\diamond
F-F^2+\sum_{l=1}^{2k-1}\frac{(-1)^{l+1}}{l!}\Vert
D^lF\Vert_{H^{\otimes l}}^2
\end{eqnarray}
and observe that for $F=\sum_{k=1}^n\lambda_k\mathcal{E}(h_k)$ one
gets
\begin{eqnarray*}
T(F)&=&\sum_{j,k=1}^n\lambda_j\lambda_k\mathcal{E}(h_j)\diamond\mathcal{E}(h_k)\Big(1-e^{\langle
h_j,h_k\rangle_H}+\sum_{l=1}^{2k-1}\frac{(-1)^{l+1}}{l!}e^{\langle
h_j,h_k\rangle_H}\langle h_j,h_k\rangle^l_H\Big).
\end{eqnarray*}
Now the matrix $A=\{a_{i,j}\}_{1\leq i,j\leq n}$ defined by
\begin{eqnarray*}
a_{ij}:=1-e^{\langle
h_j,h_k\rangle_H}+\sum_{l=1}^{2k-1}\frac{(-1)^{l+1}}{l!}e^{\langle
h_j,h_k\rangle_H}\langle h_j,h_k\rangle^l_H
\end{eqnarray*}
is positive semi-definite by the second inequality in (\ref{HK})
applied to $F=\sum_{k=1}^n\lambda_k\mathcal{E}(h_k)$. The proof then
follows as before.
\end{proof}

\vspace*{8pt}

\end{document}